\newtheorem{thrm}{Theorem}[section]
\newtheoremstyle{hdefinition}%
  {\topsep}%
  {\topsep}%
  {\upshape}
  {}%
  {\bfseries}%
  {.}
  { }%
  {\thmnumber{#2 }\thmname{#1}\thmnote{ \rm(#3)}}%
\newtheoremstyle{hclaim}%
  {\topsep}%
  {\topsep}%
  {\itshape}%
  {}%
  {\bfseries}%
  {.}
  { }%
  {\thmname{#1}\thmnote{ \rm#3}}%
\theoremstyle{hclaim}
\newtheorem*{claim*}{Claim}
\theoremstyle{hdefinition}
\theoremstyle{hclaim}
\begin{document}

\title{Ultrafilter limits of asymptotic density are not universally measurable}

\setcounter{footnote}{-1}
\author{J\"{o}rg Brendle \and Paul B. Larson\thanks{Supported by Invitation Fellowship for Research in Japan (Short-Term) ID No. S09138,
Japan Society for the Promotion of Science.
The first author was supported in part
by Grant-in-Aid for Scientific Research (C) 21540128,
Japan Society for the Promotion of Science,
and
the second author was supported in part by NSF Grant DMS-0801009. This note appeared in 
RIMS K\^{o}ky\^{u}roku Bessatsu No. 1686, 2009/11/16-2009/11/19, 16-18. }
}

\date{\empty}

\pagestyle{empty}

\maketitle

\thispagestyle{empty}

Given a nonprincipal ultrafilter $U$ on $\omega$ and a sequence
$\bar{x} = \langle x_{n} : n \in \omega \rangle$ consisting of members of a compact Hausdorff space
$X$, the $U$-\emph{limit} of $\bar{x}$ (written
$\lim_{n \to U}x_{n}$) is the unique $y \in X$ such that for every open set $O \subseteq X$ containing
$y$, $\{ n \mid x_{n} \in O\} \in U$. Letting $X$ be the unit interval $[0,1]$, this operation
defines a finitely additive measure $\mu_{U}$ on $\mathcal{P}(\omega)$ in terms of asymptotic density,
letting $\mu_{U}(A) = \lim_{n \to U} |A \cap n|/n$, for each $A \subseteq \omega$. A \emph{medial limit}
is a finitely additive measure on $\mathcal{P}(\omega)$, giving singletons measure $0$ and
$\omega$ itself measure $1$, such that for each open set $O \subseteq [0,1]$, the collection of
$A \subseteq \omega$ given measure in $O$ is universally measurable, i.e., is measured by every
complete finite Borel measure on $\mathcal{P}(\omega)$ (see \cite{L} for more on medial limits and universally
measurable sets). If there could consistently be a nonprincipal
ultrafilter $U$ such that measure given by the $U$-limit of asymptotic density were universally measurable,
this would give a relatively simple example of a medial limit. We show here, however, that this cannot be
the case.

\begin{thrm} If $U$ is a nonprincipal ultrafilter on $\omega$, then the function $$\mu_{U} \colon \mathcal{P}(\omega)
\to [0,1]$$ defined by letting $\mu_{U}(A) = \lim_{n \to U}|A \cap n|/n$ is not universally measurable.
\end{thrm}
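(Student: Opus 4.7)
My plan is to assume toward a contradiction that $\mu_U$ is universally measurable and then produce a non-Lebesgue-measurable subset of $2^\w$ as the preimage of a Borel set under $\mu_U$ composed with a continuous map. The device is a ``block-encoding'' $\Phi : 2^\w \to 2^\w$ designed so that $\mu_U(\Phi(\w))$ takes only the values $0$ and $1$, and equals $1$ exactly when $\Phi(\w) \in U$; the non-measurability of the relevant set in $2^\w$ then comes from a standard tail-plus-symmetry argument.

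Concretely, I fix a rapidly growing sequence $0 = n_0 < n_1 < n_2 < \cdots$ with $n_{k+1}/n_k \to \infty$, set $I_k = [n_k, n_{k+1})$, and define the continuous map $\Phi(\w) = \bigcup\{I_k : \w_k = 1\}$. A direct computation yields, for $n \in I_k$,
\[
\left| \frac{|\Phi(\w) \cap n|}{n} - \w_k \right| \;\le\; \frac{n_k}{n},
\]
so $\{n : s_n(\Phi(\w)) \notin [0,\epsilon] \cup [1-\epsilon,1]\}$ is contained in the $\w$-independent set $\bigcup_k [n_k, n_k/\epsilon)$, which has natural density $0$ by the rapid growth. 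Provided the sequence $(n_k)$ is chosen so that this density-$0$ ``boundary'' set is not in $U$ for every $\epsilon > 0$, it follows that $\mu_U(\Phi(\w)) \in \{0,1\}$ for every $\w$, and the same estimate shows $\mu_U(\Phi(\w)) = 1$ if and only if $\Phi(\w) \in U$ (using that the intersection of $\Phi(\w)$ with the density-$1$ ``deep part of blocks'' lies in $U$ iff $\Phi(\w)$ does).

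Set $E := \{\w : \Phi(\w) \in U\}$. Since $U$ contains the Fr\'echet filter, $U$-membership is invariant under finite modifications of $\Phi(\w)$, and hence $E$ is invariant under finite modifications of $\w$; in other words, $E$ is a tail event for the Bernoulli $\tfrac{1}{2}$ measure $\lambda$ on $2^\w$. By Kolmogorov's $0$-$1$ law, $\lambda$-measurability of $E$ would force $\lambda(E) \in \{0,1\}$. However, the involution $\w \mapsto 1-\w$ sends $\Phi(\w)$ to $\w \setminus \Phi(\w)$, interchanging $E$ with $2^\w \setminus E$ and forcing $\lambda(E) = 1/2$; equivalently, $E$ may be identified with a non-principal ultrafilter on $\w$, which is classically not Lebesgue measurable. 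On the other hand $E = \Phi^{-1}(\mu_U^{-1}(\{1\}))$ and $\Phi$ is continuous, so universal measurability of $\mu_U$ would make $E$ Lebesgue measurable --- contradiction.

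The main obstacle is the adaptive choice of $(n_k)$: we must arrange, using only the non-principality of $U$, that for every $\epsilon > 0$ the density-$0$ ``starts of blocks'' set $\bigcup_k[n_k, n_k/\epsilon)$ lies outside $U$. This is automatic when $U$ extends the density-$1$ filter, since then $U$ contains no density-$0$ set at all. For a general non-principal $U$ one builds $(n_k)$ inductively, using at each stage the non-principality of $U$ to push the next block-boundary far enough that the relevant ``beginning of block'' intervals can be kept out of $U$; this combinatorial step is the delicate part of the argument.
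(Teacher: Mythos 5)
Your overall architecture matches the paper's: a block coding $\Phi$ of $2^{\omega}$ into $\mathcal{P}(\omega)$ under which $\mu_U\circ\Phi$ is forced near $0$ or $1$ according to whether $\Phi(w)\in U$, followed by the classical argument that (the trace of) a nonprincipal ultrafilter is non-measurable for the coin-flipping measure. But the step you explicitly defer --- arranging that the ``beginnings of blocks'' $\bigcup_k[n_k,n_k/\epsilon)$ avoid $U$ --- is the entire mathematical content of the theorem, and neither of your justifications for it survives scrutiny. First, the assertion that this set has natural density $0$ is false: the single interval $[n_k,n_k/\epsilon)$ already occupies a fraction $1-\epsilon$ of $[0,n_k/\epsilon)$, so the set has upper density at least $1-\epsilon$ no matter how fast $(n_k)$ grows; consequently even your ``automatic'' case of an ultrafilter extending the density filter is not automatic as stated. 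Second, the proposed induction (``push the next block-boundary far enough, using non-principality, so that the beginning-of-block intervals can be kept out of $U$'') is vacuous as described: whether the infinite union $\bigcup_k[n_k,n_k/\epsilon)$ belongs to $U$ is not decided by any finite part of the construction, so choosing $n_{k+1}$ very large has no effect on it, and non-principality of $U$ (which only says $U$ contains the cofinite sets) provides no mechanism for keeping a prescribed infinite union of intervals out of $U$.

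What actually closes this gap in the paper is a use of the full ultrafilter property rather than non-principality: fix in advance the intervals $I_m=[5^{m-1},5^m)$, observe that either the union of the even-indexed or of the odd-indexed $I_m$ lies in $U$, and then pair adjacent intervals into blocks so that the $U$-large intervals are the second, long halves of the blocks. For $n$ in such a long half, the portion of its block below $n$ has size at most $n/5$, which gives the estimate with the single threshold $\epsilon=1/4$; note that one fixed threshold suffices, since one then works with $\mu_U^{-1}((3/4,1])$ (closed under finite intersections because $\mu_U$ is finitely additive) instead of your exact preimage $\mu_U^{-1}(\{1\})$, which would require the boundary condition for every $\epsilon$ simultaneously. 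Without some such dichotomy argument depending on $U$, your proof has a genuine gap at its central point; with it, the rest of your outline (the density estimate on blocks, the identification of $E$ with a nonprincipal ultrafilter, and the tail-plus-symmetry contradiction) goes through and is essentially the paper's proof.
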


\begin{proof} Let $I_{0} = \{0\}$, and for each positive $n \in \omega$
let $$I_{n} = \{ 5^{n-1}, 5^{n-1} + 1, \ldots, 5^{n} - 1\}.$$ Either the
union of the $I_{n}$'s for $n$ even is in $U$, or the corresponding
union for $n$ odd is. In the first case, let $J_{0} = I_{0} \cup I_{1} \cup I_{2}$,
and for each positive $n$, let $J_{n} = I_{2n+1} \cup I_{2n+2}$. In the second
case, let $J_{n} = I_{2n} \cup I_{2n+1}$ for all $n \in \omega$. In either case,
let $S$ be the set of $A \subseteq \omega$ such that $A \cap J_{n} \in \{\emptyset,
J_{n}\}$ for all $n \in \omega$. Then $S$ is a perfect subset of $\mathcal{P}(\omega)$,
and the mapping $H \colon S \to \mathcal{P}(\omega)$ sending $A \in S$ to $\{n \mid A \cap J_{n} = J_{n}\}$
is a homeomorphism.
Let $F_{0}$ be the set of $A \subseteq \omega$ such that $\mu_{U}(A) \in [0,1/4)$, and let
$F_{1}$ be the set of $A \subseteq \omega$ such that $\mu_{U}(A) \in (3/4, 1]$ (so $F_{1}$ is the set
of complements of elements of $F_{0}$).
It will be enough to show that $F_{1} \cap S$ is not a universally measurable subset of
$S$.

We claim for each $A \in S$, the set of $n$ such that $|A \cap n|/n \in [0,1/4)\cup(3/4,1]$ is in $U$.
This follows from the fact that all (but possibly one) of the $J_{n}$'s are unions of two
consecutive $I_{m}$'s, and that the union of the larger members of these pairs is in $U$.
Each such consecutive pair (for $n > 0$) has the form $I_{m} = \{ 5^{m-1}, 5^{m-1} + 1, \ldots, 5^{m} - 1\}$
and $I_{m+1} = \{ 5^{m}, 5^{m} + 1, \ldots, 5^{m+1} - 1\}$, and if $A \in S$, then $A$ either
contains or is disjoint from $I_{m} \cup I_{m+1}$. If it contains both, then for each
$k \in I_{m+1}$, $$|A \cap k|/k \geq (5^{m} - 5^{m-1})/5^{m} = 1 - 1/5 = 4/5 > 3/4,$$ and if it is
disjoint from both then $$|A \cap k|/k \leq 5^{m-1}/5^{m} = 1/5 < 1/4.$$ This establishes the claim.
It follows that $S \subseteq F_{0} \cup F_{1}$.
Since $\mu_{U}$ is a finitely
additive measure, the intersection of two sets of $\mu_{U}$-measure greater than $3/4$ cannot be less than $1/4$,
so $F_{1} \cap S$ is closed under finite intersections. It follows that $H$ maps $F_{1} \cap S$ homoemorphically
to a nonprincipal ultrafilter, and thus that $F_{1} \cap S$ is not universally measurable.
\end{proof}

A version of the proof just given, in the special case $\{ 5^{n} : n \in \omega \} \in U$, led to the proof in \cite{L} that
consistently there are no medial limits.

\begin{minipage}[t]{15em}
\noindent
J\"org Brendle
\\[0.5em]
Graduate School of Engineering\\ Kobe University\\ Rokko-dai 1-1, Nada\\ Kobe 657-8501 \\JAPAN\\
brendle@kurt.scitec.kobe-u.ac.jp
\end{minipage}
\hfill
\begin{minipage}[t]{15em}
\noindent
Paul B. Larson
\\[0.5em]
Department of Mathematics\\
Miami University\\
Oxford, Ohio 45056\\
USA\\
larsonpb@muohio.edu
\end{minipage}

\end{document}